\newtheorem{theorem}{Theorem}[section]
\newtheorem{proposition}[theorem]{Proposition} 
\newtheorem{lemma}[theorem]{Lemma}
\newtheorem{cor}[theorem]{Corollary}
\newtheorem{fact}[theorem]{Fact}
\theoremstyle{definition}
\newtheorem{definition}[theorem]{Definition}
\newtheorem{question}[theorem]{Question}
\theoremstyle{remark}
\newtheorem{remark}[theorem]{Remark}
\newcommand{\zfc}{\textnormal{ZFC}}
\newcommand{\zf}{\textnormal{ZF}}
\newcommand{\dc}{\textnormal{DC}}
\newcommand{\ad}{\textnormal{AD}}
\newcommand{\cf}{\mbox{cf}}
\newcommand{\mf}{\mathfrak}
\newcommand{\mc}{\mathcal}
\newcommand{\mbb}{\mathbb}
\newcommand{\baire}{{{^\omega}\omega}}
\title[Bounding 2D Functions by Products of 1D Functions]
 {Bounding 2D Functions \\ by Products of 1D Functions}
\author{Fran\c{c}ois Dorais}
\author{Dan Hathaway}
\date{\today}
\address{
Department of Mathematics \\
University of Vermont\\
Innovation Hall \\
82 University Place\\
Burlington, VT 05405 U.S.A.}
\email{Francois.Dorais@uvm.edu}
\address{
Department of Mathematics \\
University of Vermont\\
Innovation Hall \\
82 University Place\\
Burlington, VT 05405 U.S.A.}
\email{Daniel.Hathaway@uvm.edu}
\urladdr{\url{https://www.uvm.edu/cems/mathstat/profiles/daniel-hathaway}}
\begin{document}

\begin{abstract}
Given sets $X,Y$ and a regular cardinal $\mu$,
 let $\Phi(X,Y,\mu)$ be the statement
 that for any function $f : X \times Y \to \mu$,
 there are functions
 $g_1 : X \to \mu$ and
 $g_2 : Y \to \mu$
 such that
 for all $(x,y) \in X \times Y$,
 $$f(x,y) \le \max \{ g_1(x), g_2(y) \}.$$
In $\zfc$, the statement $\Phi(\omega_1, \omega_1, \omega)$
 is false.
However, we show the theory
 $\zf$ + ``the club filter on $\omega_1$ is normal''
 + $\Phi(\omega_1, \omega_1, \omega)$
 (which is implied by
 $\zf + \ad$)
% $\zf + \dc$ + ``$V = L(\mathbb{R})$''
% + ``$\omega_1$ is measurable'')
 implies that for every $\alpha < \omega_1$
 there is a $\kappa \in (\alpha,\omega_1)$
 such that in some inner model,
 $\kappa$ is measurable
 with Mitchell order $\ge \alpha$.

There was an error in Welch's paper ``Characterizing Subsets of $\omega_1$ Constructible From a Real'',
 which he has retracted in a personal communication.
Our paper originally referenced that paper.
In this version of our paper, we are not using that result.
Our consistency strength upper bound has changed accordingly.
% This version of the paper fixes an error
%  with the consistency strength upper bound
%  which originated from an error in the paper
%  ``Characterising Subsets of $\omega_1$ Constructible From a Real''
%  by Welch.
\end{abstract}

\maketitle

\section{Introduction}

\begin{remark}
In the published version of this paper
 we claimed that $\Phi(\omega_1,\omega_1,\omega)$
 was implied by $\zf + \dc + $``$V = L(\mathbb{R})$''$ +$``$\omega_1$ is measurable.''
The error came from
 Corollary 6 of \cite{Welch},
 which claimed that $\zf + \dc + $``$\omega_1$ is measurable''
 implies that every subset of $\omega_1$ is constructible from a real.
The author of \cite{Welch} agreed with us (in a personal communication)
 that the Corollary 6 of that paper is incorrect.
So, the revised claim we are making is that
 $\zf + \ad$ implies $\Phi(\omega_1,\omega_1,\omega)$.
\end{remark}

The Axiom of Choice allows us to define
 objects of size $\omega_1$ that
 do not behave like objects of size $\omega$.
For infinite cardinals $\lambda_1$ and $\lambda_2$,
 we introduce a statement
 $\Phi(\lambda_1, \lambda_2, \omega)$
 which implies in some sense
 that functions from
 $\lambda_1 \times \lambda_2$ to $\omega$
 behave like functions from
 $\omega \times \omega$ to $\omega$.
In Section 2 we show in $\zfc$ that
 $\Phi(\lambda, \omega, \omega)$ is true
 iff $\lambda < \mf{b}$,
 where $\mf{b}$ is the \textit{bounding number}.
We also show in $\zf$ that
 $\Phi(\mathbb{R},\omega,\omega)$ is false.
In Section 3 we show that
 $\Phi(\omega_1, \omega_1, \omega)$ is false in $\zf$
 assuming $\omega_1$ is regular and there
 is an injection of $\omega_1$ into $\mathbb{R}$.
Thus, if $\zf$ + ``$\omega_1$ is regular'' +
 $\Phi(\omega_1, \omega_1, \omega)$ holds,
 then $\omega_1$ is inaccessible in every inner model
 that satisfies the Axiom of Choice.

On the other hand consider $\ad$,
 the Axiom of Determinacy.
This is an axiom which contradicts
 the Axiom of Choice.
It imposes regularity properties on
 both subsets of $\mbb{R}$ and on
 bounded subsets of $\Theta$,
 which is the smallest ordinal that
 $\mbb{R}$ cannot be surjected onto.
$\ad$ implies in some sense that subsets
 of $\omega_1$ behave like subsets of $\omega$.
Contrasting with the Axiom of Choice,
 in Section 4 we show that $\ad$ implies
 $\Phi(\omega_1, \omega_1, \omega)$.
% We also show that
%  $\zf + \dc$ + ``$V = L(\mathbb{R})$''
%  + ``$\omega_1$ is a measurable cardinal''
%  implies $\Phi(\omega_1, \omega_1, \omega)$.

In Section 5, we aim to show that
 $\zf + \Phi(\omega_1, \omega_1, \omega)$
 implies that there are inner models with large cardinals.
We show that
 $\zf$ + ``the club filter on $\omega_1$ is normal''
 + $\Phi(\omega_1, \omega_1, \omega)$ implies
 1) every real has a sharp and 2)
 there is an inner model with a measurable cardinal.
We then go on to show that there are inner models
 with measurables of modestly high Mitchell order.
However we do not know how to produce an inner model
 with a measurable $\kappa$ of Mitchell order $\ge \kappa^+$,
 for example.

Here are the implications
 between various statements related to $\Phi$:
 % in the context of $V = L(\mathbb{R})$: %We don't need to restrict to L(R) anymore.
\begin{cor}
Each item below implies the next:
\begin{itemize}
\item[1)] $\ad$,
% \item[2)] $L(\mathbb{R}) \models \dc + ``\omega_1$ is a measurable cardinal'',
\item[2)] $\Phi(\omega_1, \omega_1, \omega)$
 + ``the club filter on $\omega_1$ is normal'',
\item[3)]
 either there is an inner model with a Woodin cardinal,
 or there is not and
 for some inner model $J$ which contains $K$,
 we have
 for each $\alpha < \omega_1$
 there exists $\kappa \in (\alpha,\omega_1)$
 such that $\kappa$ is measurable of Mitchell order $\ge \alpha$
 in $K^J$.
\end{itemize}
\end{cor}

We will use the following concepts
 several times:
\begin{definition}
An \textit{inner model}
 is a transitive model of $\zf$
 which contains all the ordinals.
$L$ is the smallest inner model.
\end{definition}

\begin{definition}
Given a set $X$, $L(X)$ represents the smallest
 inner model $M$ such that $X \in M$.
On the other hand, $L[X]$ represents the smallest
 inner model $M$ such that $X \cap M \in M$.
\end{definition}

For any set $X$, the inner model $L[X]$ exists
 (and certainly $L(X)$ exists as well).
For a set of ordinals $A$, we have
 $A \in L[A]$,
 and so $L[A] = L(A)$.
That is, if $A$ is a set of ordinals,
 then $L[A]$ is the smallest inner model $M$
 such that $A \in M$.
For a set of ordinals $A$,
 we will use the $L[A]$ notation instead
 of the $L(A)$ one.

\begin{definition}
Given any inner model $M$
 and a set of ordinals $A$,
 $M[A]$ is the smallest inner model $N$
 such that $M \subseteq N$ and $A \in N$.
\end{definition}
If $A$ is a set of ordinals,
 then $L[A]$ satisfies the Axiom of Choice.
We will need the following
 strengthening
 (see \cite{Grigorieff} page 451 for a proof):
\begin{proposition}
\label{zfc_preservation}
Let $M$ be an inner model.
Let $A$ be a set of ordinals.
Then $M[A]$ exists.
If moreover $M$ satisfies the Axiom of Choice,
 then so does $M[A]$.
\end{proposition}

\subsection{Consistency strength of related theories}

First, consider the theory
 $$T_1 := \zf + ``\mbox{the club filter on $\omega_1$ is an ultrafilter''.}$$
It is apparently unknown whether $T_1$
 is equiconsistent with $\zfc$.
Now when we add $\dc$ to $T_1$,
 we get that the club filter on $\omega_1$ is countably complete,
 and hence $\omega_1$ is a measurable cardinal.
The assumption (in $\zf$) that $\omega_1$ is a measurable cardinal
 implies there is an inner model of $\zfc$ with a measurable cardinal,
 by the $L[\mu]$ construction.
Jech showed that adding $\dc$ to the theory
 $\zf +$ ``$\omega_1$ is a measurable cardinal'' does not increase its consistency strength.
That is, Jech showed that
 $$T_2 := \zf + \dc + ``\omega_1 \mbox{ is a measurable cardinal''}$$
 is equiconsistent with $\zfc$ + ``there exists a measurable cardinal''.

We can strengthen $T_1$ and $T_2$ simultaneously to get
 $$T_3 := \zf + \dc + \mbox{``the club filter on $\omega_1$ is an ultrafilter''}.$$
Note that in this theory, the club filter on $\omega_1$ is a
 (countably complete) normal ultrafilter,
 witnessing that $\omega_1$ is measurable.
Mitchell \cite{mitchell_applications_of_covering}
 showed that $T_3$ implies there is an inner model of $\zfc$
 with a cardinal with a weak repeat point.

The consistency of the existence of 
 a cardinal $\kappa$ of Mitchell order $\kappa^{++}$
 strictly implies the consistency of the existence of a cardinal
 with a weak repeat point,
 which in turn strictly implies the consistency of the existence
 of a cardinal $\kappa$ of Mitchell order $\kappa^{+}$,
 which strictly implies the consistency of
 the existence of an inner model $M$
 such that $(\forall \alpha < \omega_1)(\exists \kappa \in (\alpha,\omega_1))\,
 \kappa$ has Mitchell order $\ge \alpha$ in $M$.

Finding an upper bound for $T_3$ is more problematic.
Radin and Woodin
 showed that if the existence of a
 $\mathcal{P}^2$-measurable cardinal is
 consistent, then so is $T_3$
 (see \cite{mitchell_how_weak} for a discussion).
Mitchell improved this in \cite{mitchell_how_weak}
 to get that if
 it is consistent with $\zfc$
 that there is a measurable cardinal
 $\kappa$ of Mitchell order $\kappa^{++}$,
 then $T_3$ is consistent.
Finally, Mitchell has worked towards reducing
 the consistency strength upper bound for $T_3$ down to
 $\zfc$ plus ``there exists a cardinal with
 a weak repeat point''.
%Finally, Mitchell improved his own result in
% the preprint
% \cite{mitchell_one_repeat}
% by showing that if
% the theory $\zfc$ + ``there is a cardinal with
% a weak repeat point'' is consistent,
% then so is $T_3$.

Next, the club filter on $\omega_1$ being an ultrafilter
 is implied by 1) every real having a sharp together with
 2) every subset of $\omega_1$ being constructible from a real.
Finally, 1) and 2) follow from $\ad$.
We discuss this in Section~\ref{sec_upper_bound}.

% Next, we consider the theory which is $V = L(\mathbb{R})$
%  added to $T_3$:
%  $$T_4 := \zf + \dc + ``V=L(\mathbb{R})\mbox{''} +
%  \mbox{ ``the club filter on $\omega_1$
%  is an ultrafilter''.}$$
% Let $T_{base} = \zf + \dc + ``V=L(\mathbb{R})$''.
% By our Proposition~\ref{equiv_of_measurable},
%  the following theories are equivalent:
% \begin{itemize}
% \item $T_4 = T_{base}$ + ``the club filter on $\omega_1$ is an ultrafilter''.
% \item $T_{base}$ + ``the club filter on $\omega_1$ is a normal ultrafilter''.
% \item $T_{base}$ + ``$\omega_1$ is a measurable cardinal''.
% \item $T_{base}$ + ``every real has a sharp and every subset of $\omega_1$
%  is constructible from a real''.
% \end{itemize}
% In $L(\mathbb{R})$,
%  the theory $T_4$ follows from $\ad$,
%  and so the consistency strength of $T_4$
%  is less than or equal to that of
%  $\zfc$ + there are infinitely many Woodin cardinals.
% However we suspect that $T_4$ is equiconsistent with
%  $\zfc$ + ``there is a cardinal with a weak repeat point''.

% Recall that we show that $T_4$ implies
%  $$T^* := \zf + \mbox{ ``the club filter on $\omega_1$ is normal''}
%  + \Phi(\omega_1, \omega_1, \omega).$$
% We conjecture that the consistency strength of $T^*$
%  is also that of $\zfc$ +
%  ``there is a cardinal with a weak repeat point''.

\subsection{Acknowledgments}

We would like to thank
 Cody Dance and Trevor Wilson
 for discussions relating to this paper,
 especially concerning the behavior of
 sets of ordinals
 assuming $\ad$.
We would like to thank Phil Tosteson
 who originally asked about the status of
 $\Phi(\lambda_1, \lambda_2, \omega)$.
We would like to thank Andreas Blass
 for discussing the Dodd-Jensen core model.
Finally, we would like to thank
 Arthur Apter
 and Asaf Karagila
 concerning the consistency strength
 of $\omega_1$ being measurable
 versus the consistency strength
 of the club filter on $\omega_1$
 being a normal ultrafilter.

\section{Definition and the Bounding Number}

\begin{definition}
For sets $X, Y$
 and an infinite regular cardinal $\mu$,
 let $\Phi(X,Y,\mu)$ be the statement
 that whenever $f : X \times Y \to \mu$,
 there are functions
 $g_1 : X \to \mu$ and
 $g_2 : Y \to \mu$
 such that
 for all $(x,y) \in X \times Y$,
 $$f(x,y) \le \max \{ g_1(x), g_2(y) \}.$$
\end{definition}
We will mostly consider the case that
 $X$ and $Y$ are cardinals (which are ordinals).
When this happens,
 without loss of generality
 $\lambda_1, \lambda_2 \ge \mu$.
In this section we will consider arbitrary $\mu$,
 but for the rest of the paper we will focus
 on the $\mu = \omega$ case.

\begin{theorem}[$\zf$]
Let $\mu$ be a regular cardinal.
Let $X$ be a set.
Then $\Phi(X,\mu,\mu)$ holds iff
 each family $\langle f_x : x \in X \rangle$
 of functions from $\mu$ to $\mu$
 is eventually dominated (mod $<\mu$)
 by some single function.
\end{theorem}
\begin{proof}
First assume every such family indexed by $X$
 can be eventually dominated by a single
 function from $\mu$ to $\mu$.
Fix $f : X \times \mu \to \mu$.
Let $g_2 : \mu \to \mu$ be such that
 for each $x \in X$, the set
 $S_x := \{ y < \mu : f(x,y) > g_2(y) \}$
 has size $< \mu$.
Let $g_1 : X \to \mu$
 be such that for each $x \in X$,
 $g_1(x) \ge \sup \{ f(x,y) : y \in S_x \}$.
The functions $g_1$ and $g_2$ work.

For the other direction,
 assume there is a family indexed by $X$
 of functions from $\mu$ to $\mu$
 that cannot be eventually dominated
 by a single function.
Using such a family,
 let $f : X \times \mu \to \mu$
 be such that the functions of the form
 $y \mapsto f(x,y)$ cannot be
 eventually dominated by a single
 function from $\mu$ to $\mu$.
By possibly increasing the values
 of the function $f$,
 we may assume that whenever
 $x \in X$ and
 $Y \in [\mu]^\mu$,
 the set $f`` \{x\} \times Y$
 is unbounded below $\mu$.
Now, consider any $g_2 : \mu \to \mu$.
By hypothesis, fix some
 $x \in X$ such that
 $g_2$ does not eventually dominate
 the function $y \mapsto f(x,y)$.
That is,
 $S_x := \{ y \in \mu :
 f(x,y) > g_2(y) \}$ is in $[\mu]^\mu$,
 so $f`` \{x\} \times S_x$
 is unbounded below $\mu$.
Hence, there is no $\alpha < \mu$ such that
 for all $y < \mu$,
 $f(x,y) \le \max \{ \alpha, g_2(y) \},$
 so there can be no function $g_1$
 which works (because no value of
 $\alpha = g_1(x)$ works).
\end{proof}

Every family of size $\mu$ of functions from
 $\mu$ to $\mu$ is eventually dominated by a single
 function.
On the other hand, there exists a family indexed by ${^\mu 2}$
 of functions from $\mu$ to $\mu$ that cannot be
 eventually dominated by any single function.
This gives us the following:
\begin{cor}[$\zf$]
\label{zf_no_R_ord}
Let $\mu$ be a regular cardinal.
Then $\Phi(\mu, \mu, \mu)$ holds but
 $\Phi({^\mu 2}, \mu, \mu)$ does not.
\end{cor}

\begin{cor}[$\zf$]
\label{no_injection_observation}
Let $\mu$ be a regular cardinal.
Assume there is no injection of $\mu^+$ into ${^\mu 2}$.
Then for any cardinal $\lambda$ (that is an ordinal),
 $\Phi(\lambda,\mu,\mu)$ holds.
\end{cor}
\begin{proof}
Any size $\lambda$ family of functions from
 $\mu$ to $\mu$ has at most $\mu$ distinct members,
 and hence the family can be eventually dominated
 by a single function.
\end{proof}

Recall that in $\zfc$,
 $\mf{b}(\mu)$ is the cardinality of the smallest
 family of functions from $\mu$ to $\mu$
 that cannot be eventually dominated by a single function.
See \cite{Blass} for more on $\mathfrak{b}(\omega)$.
\begin{cor}[$\zfc$]
Let $\mu$ be a regular cardinal.
Let $\lambda$ be a cardinal.
Then $\Phi(\lambda,\mu,\mu)$ holds
 iff $\lambda < \mf{b}(\mu)$.
\end{cor}

\section{First Countable Topologies}

By Corollary~\ref{zf_no_R_ord},
 $\Phi(\mathbb{R},\omega,\omega)$ fails in $\zf$,
 and hence so does
 $\Phi(\mathbb{R}, \omega_1, \omega)$ and
 $\Phi(\mathbb{R}, \mathbb{R}, \omega)$.
In this section, we will show
 in $\zf$
 that if $\omega_1$ is regular
 and there is an injection of $\omega_1$
 into $\mathbb{R}$,
 then $\Phi(\omega_1, \omega_1, \omega)$
 fails as well.

Given a topological space $X$
 and a point $x \in X$,
 we call $\mc{B} \subseteq \mc{P}(X)$
 a neighborhood basis
 for $x$ iff
 1) every $U \in \mc{B}$ is open and contains $x$
 and 2) for every open $W \ni x$,
 there is some $U \in \mc{B}$ such that
 $U \subseteq W$.
We call $X$ \textit{first-countable}
 iff every $x \in X$ has a countable
 neighborhood basis.

\begin{definition}
Let $X$ be a topological space.
We call $X$ \textit{uniformly first-countable}
 iff there is a function $F$ with domain $X$
 such that for each $x \in X$,
 $F(x)$ is a sequence $\langle U_{x,n} : n < \omega \rangle$
 such that $\{ U_{x,n} : n < \omega \}$
 is a neighborhood basis for $x$.
\end{definition}

Assuming the Axiom of Choice,
 a topological space is first-countable
 iff it is uniformly first-countable.
Recall that a topological space $X$ is $T_1$
 iff for any distinct $x,y \in X$,
 there is an open set which contains $x$ but not $y$.

\begin{theorem}[$\zf$]
\label{metric_space_thm}
Let $X$ be a $T_1$ and
 uniformly first-countable
 topological space.
Let $Y \subseteq X$
 and assume $\Phi(X, Y, \omega)$.
Then we may write
 $Y = \bigcup_{n < \omega} Y_n$
 where each $Y_n$ is a closed discrete subset of $X$
 and $Y_0 \subseteq Y_1 \subseteq ...$.
\end{theorem}
\begin{proof}
Let the function $F$ witness that $Y$
 is uniformly first-countable, where
 $F(x) = \langle U_{x,n} : n < \omega \rangle$.
Assume without loss of generality that for each $x \in X$,
 we have $U_{x,0} \supseteq U_{x,1} \supseteq ...$.
Let $f : X \times Y \to \omega$ be defined by
 $$f(x,y) := \begin{cases}
 \min \{ n < \omega : y \not\in U_{x,n} \} & \mbox{if } x \not= y, \\
 0 & \mbox{if } x = y.
 \end{cases}$$
Thus for all distinct $x \in X$ and $y \in Y$,
 $$y \not\in U_{x,f(x,y)}.$$
By $\Phi(X,Y,\omega)$,
 there are functions $g_1 : X \to \omega$ and  $g_2 : Y \to \omega$
 such that $f(x,y) \le \max \{ g_1(x), g_2(y) \}$
 for all distinct $x \in X$ and $y \in Y$.
Thus $$y \not\in U_{x,\max\{g_1(x),g_2(y)\}}$$
 for all distinct $x \in X$ and $y \in Y$.
Define $Y_n := \{ y \in Y : g_2(y) \le n \}$.
Then
 $$y \not\in U_{x,\max\{g_1(x),n\}} =: N(x,n)$$
 for any distinct $x \in X$ and $y \in Y_n$.
Since $N(x,n)$ is independent of $y \in Y_n$,
 we have that $N(x,n)$ is a neighborhood of $x$
 whose only possible point in $Y_n$ that it contains
 is $x$ itself.
That is,
 $$N(x,n) \cap Y_n =
 \begin{cases}
 \emptyset & \mbox{if } x \not\in Y_n, \\
 \{x\} & \mbox{if } x \in Y_n.
 \end{cases}$$\
It follows that $Y_n$ is a discrete space.
Also note that since each point $x$ in $X - Y_n$
 has a neighborhood $N(x,n)$ disjoint from $Y_n$,
 we have $X - Y_n$ is open,
 and hence $Y_n$ is closed.
Also note that $Y = \bigcup_{n < \omega} Y_n$
 and $Y_0 \subseteq Y_1 \subseteq ...$.
\end{proof}

\begin{cor}[$\zf$]
\label{no_injection_cor}
Assume $\omega_1$ is regular and
 there is an injection of $\omega_1$ into $\mathbb{R}$.
Then $\Phi(\omega_1, \omega_1, \omega)$ is false.
\end{cor}
\begin{proof}
Assume, towards a contradiction,
 that $\Phi(\omega_1, \omega_1, \omega)$ holds.
By hypothesis, let $f : \omega_1 \to \baire$
 be an injection.
Let $Y$ be the range of $f$.
We have that $\Phi(Y,Y,\omega)$ holds.
Since $\baire$
 is $T_1$ and uniformly first-countable,
 so is $Y$.
By Theorem~\ref{metric_space_thm},
 $Y$ is the union of a countable
 chain of discrete spaces.
Since $\baire$ is second countable,
 so is $Y$,
 and hence every discrete subspace of $Y$
 is countable.
Hence $Y$ is the union of countably
 many countable sets.
Using the bijection $f : \omega_1 \to Y$,
 we see that $\omega_1$
 is a countable union of countable sets.
This contradicts $\omega_1$
 being regular.
\end{proof}

\begin{cor}[$\zfc$]
$\Phi(\omega_1, \omega_1, \omega)$ is false.
\end{cor}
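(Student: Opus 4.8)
The plan is to show that the specific function $f : \omega_1 \times \omega_1 \to \omega$ produced by Lemma~\ref{almost_disjoint_family} already witnesses $\neg \Phi(\omega_1,\omega_1,\omega)$. First I would reduce the problem to a single combinatorial statement about $f$. Given arbitrary $g_1, g_2 : \omega_1 \to \omega$, I would use the regularity of $\omega_1$: since $\omega_1 = \bigcup_{n} g_1^{-1}(n) = \bigcup_n g_2^{-1}(n)$ is a countable union in each case, some fiber $A := g_1^{-1}(m)$ and some fiber $B := g_2^{-1}(n)$ must be uncountable. On the rectangle $A \times B$ we have $\max\{g_1(x_1), g_2(x_2)\} = \max\{m,n\} =: k$ identically, so to defeat $g_1, g_2$ it suffices to find $(x_1, x_2) \in A \times B$ with $f(x_1, x_2) > k$. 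Thus everything reduces to the claim that \emph{for every pair of uncountable $A, B \subseteq \omega_1$ and every $k \in \omega$ there exist $x_1 \in A$ and $x_2 \in B$ with $f(x_1, x_2) > k$}; equivalently, $f$ is unbounded on every uncountable combinatorial rectangle.

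The heart of the argument --- and the step I expect to be the main obstacle --- is proving this claim from the almost disjointness of the family $\{x \mapsto f(x,y) : y < \omega_1\}$. I would argue by contradiction: suppose $f(x_1, x_2) \le k$ for all $(x_1, x_2) \in A \times B$. Choose $k+2$ distinct columns $y_0, \dots, y_{k+1} \in B$. For each fixed $x_1 \in A$, the tuple $(f(x_1, y_0), \dots, f(x_1, y_{k+1}))$ consists of $k+2$ values lying in the $(k+1)$-element set $\{0, \dots, k\}$, so by pigeonhole there are $i < j$ with $f(x_1, y_i) = f(x_1, y_j)$. Hence $A \subseteq \bigcup_{i<j} \{x_1 < \omega_1 : f(x_1, y_i) = f(x_1, y_j)\}$. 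By Lemma~\ref{almost_disjoint_family} each of the finitely many sets on the right is countable, so $A$ is countable, contradicting that $A$ is uncountable. This establishes the claim, and combined with the reduction above it proves the corollary.

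A more uniform alternative is to feed $f$ directly into Lemma~\ref{ideal_lemma} using the $\omega_1$-complete (that is, $\omega^+$-complete) ideal $\mc I$ of countable subsets of $\omega_1$. For this I would need, for each uncountable $Y$, a single $x$ with $f``\{x\} \times Y$ unbounded; note that almost disjointness of the columns alone does not give this, but the \emph{construction} in Lemma~\ref{almost_disjoint_family} does, since each row $y \mapsto f(x,y)$ is injective on $\{y : y < x\}$. Given uncountable (hence unbounded) $Y$, I would take $x := \sup\{y_n : n \in \omega\}$ for an increasing $\omega$-sequence $y_0 < y_1 < \cdots$ from $Y$; then $Y \cap x$ is infinite and $y \mapsto f(x,y)$ is injective on it, so $f``\{x\} \times Y$ is unbounded. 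Either route yields $\neg\Phi(\omega_1,\omega_1,\omega)$, and I would likely present the self-contained rectangle argument as the main line, since it uses only the stated almost-disjointness conclusion rather than the details of the construction.
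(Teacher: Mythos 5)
Your main rectangle argument is correct, and it is essentially an unrolled version of the paper's proof. The paper proves this corollary by citing Lemma~\ref{almost_disjoint_family} together with Lemma~\ref{ideal_lemma} (applied to the $\omega_1$-complete ideal of countable subsets of $\omega_1$), asserting without detail that the almost disjoint family of columns automatically satisfies the unboundedness hypothesis of Lemma~\ref{ideal_lemma}. Your main line replaces the appeal to Lemma~\ref{ideal_lemma} by the direct fiber decomposition (uncountable fibers $A = g_1^{-1}(m)$ and $B = g_2^{-1}(n)$, on which $\max\{g_1,g_2\}$ is the constant $k$) and then supplies exactly the pigeonhole computation --- $k+2$ columns taking values in a $(k+1)$-element set, with each agreement set $\{x : f(x,y_i)=f(x,y_j)\}$ countable by almost disjointness --- that the paper leaves implicit. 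The two proofs have the same mathematical content; yours is self-contained and makes the suppressed step explicit, while the paper's factors through a general lemma that also handles its $\mbb{R} \times \omega_1$ example.

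One aside in your proposal is wrong, and it is worth flagging because it contradicts the step the paper actually takes. You claim that almost disjointness of the columns alone does not yield, for each uncountable (indeed each infinite) $Y$, a single $x$ with $f``\{x\} \times Y$ unbounded, so that your Lemma~\ref{ideal_lemma} route must lean on the injectivity built into the construction. In fact your own pigeonhole argument gives it: fixing distinct $y_0, \dots, y_{k+1} \in Y$, the set $A_k := \{x < \omega_1 : f``\{x\}\times Y \subseteq \{0,\dots,k\}\}$ is contained in the finite union $\bigcup_{i<j}\{x : f(x,y_i) = f(x,y_j)\}$, hence is countable; and in $\zfc$ the union $\bigcup_{k} A_k$ is countable, so all but countably many $x$ satisfy that $f``\{x\}\times Y$ is unbounded. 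This is precisely the tacit justification for the first sentence of the paper's proof, which derives the hypothesis of Lemma~\ref{ideal_lemma} from the almost-disjointness conclusion of Lemma~\ref{almost_disjoint_family} alone. Your fallback through the construction ($f(x,\cdot)$ injective on $\{y : y < x\}$, with $x$ the supremum of an increasing $\omega$-sequence from $Y$) is also correct, just unnecessary.
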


We will use the following later:
\begin{lemma}[$\zfc$]
\label{some_ords_are_semi_met}
Let $S$ be a set of ordinals such that
 each $\alpha \in S$ is either a successor ordinal
 or has countable cofinality.
Then $S$, with its usual order topology,
 is $T_1$ and uniformly first-countable.
\end{lemma}
\begin{proof}
We will define the appropriate function
 $F(\beta) = \langle U_{\beta,n} : n < \omega \rangle$.
For each successor ordinal $\beta \in S$,
 define $F(\beta)$ to be the sequence
 $\langle \{\beta\}, \{\beta\}, \{\beta\}, ... \rangle$.
For each limit ordinal $\beta \in S$,
 use the Axiom of Choice to choose an $\omega$-sequence
 $\beta_0 < \beta_1 < ...$ with limit $\beta$.
Let $F(\beta)$ be the sequence
 $\langle (\beta_0,\beta], (\beta_1,\beta], ... \rangle$.
One can check that this works.
\end{proof}

\section{Consistency Strength Upper Bound}
\label{sec_upper_bound}

The main result of this section is that
% \begin{equation}
%  \label{meas_hyp}
%  \zf +
%  \dc +
%  {\mbox{``}V{=}L(\mathbb{R})}\mbox{''}
%  + ``\omega_1 \mbox{ is a measurable cardinal''}
%  \end{equation}
 $\zf + \ad$
 implies $\Phi(\omega_1, \omega_1, \omega)$.
% At the same time our proof will
%  show that $\ad$ implies
%  $\Phi(\omega_1, \omega_1, \omega)$.
The following is useful:
\begin{proposition}[$\zf + \dc$]
\label{dc_and_club_ultra_implies_normal}
The following are equivalent:
\begin{itemize}
\item[1)] The club filter on $\omega_1$
 is an ultrafilter.
\item[2)] The club filter on $\omega_1$
 is a (countably complete) normal ultrafilter.
\end{itemize}
\end{proposition}
\begin{proof}
Certainly 2) implies 1),
 so all we have to do is show 1) implies 2).
Assume the club filter on $\omega_1$
 is an ultrafilter.
First note that if the club
 filter on $\omega_1$ is an ultrafilter,
 then because we are assuming $\dc$ we have
 that the club filter on $\omega_1$ is countably complete.
In fact the Countable Axiom of Choice is sufficient for this.
This is because given an $\omega$-sequence of sets
 in the club filter,
 we can form a sequence of clubs contained in each one.
And, the countable intersection of clubs is club.

We will now show that club filter is normal.
Let $\langle A_\beta : \beta < \omega_1 \rangle$
 be an $\omega_1$-sequence of sets in
 the club filter on $\omega_1$.
Let $$Y := \{ \alpha < \omega_1 :
 (\forall \beta < \alpha)\, \alpha \in A_\beta \}$$
 be their diagonal intersection.
We must show that $Y$ contains a club.
Because we are assuming
 the club filter is an ultrafilter,
 it suffices to show that $Y$ is stationary,
 because then either $Y$ contains a club or $\omega_1 - Y$
 does, but $\omega_1 - Y$ cannot contain a club because
 $Y$ is stationary.

Let $Z \subseteq \omega_1$ be a club.
We will show that $Z$ intersects $Y$.
We will construct an $\omega$-sequence
 $\langle C_n : n < \omega \rangle$
 of club subsets of $\omega_1$
 using DC.
\begin{itemize}
\item Pick $C_0 \subseteq Z \cap A_0$
 such that $0 \not\in C_0$.
\item Having chosen $C_n$,
 let $\gamma_n = \min C_n$
 and pick $C_{n+1} \subseteq
 C_n \cap Z \cap \bigcap_{\alpha < \gamma_n} A_\alpha$
 such that $\gamma_n \not\in C_{n+1}$.
\end{itemize}
Let $\gamma := \sup \gamma_n$.
Note that $(\forall n \in \omega)\, \gamma \in C_n$.
We see that $\gamma$ belongs to $Y \cap Z$.
Thus $Y \cap Z$ is non-empty,
 which is what we wanted to show.
\end{proof}

Here is the relationship
 between some relevant statements:
% For clarification,
%  here are equivalents of $\omega_1$ being measurable
%  assuming ``$V = L(\mathbb{R})$'' + $\dc$.
% We will use the first one to show
%  $\Phi(\omega_1, \omega_1, \omega)$.

\begin{proposition}[$\zf + \dc$]
\label{equiv_of_measurable}
Each item implies the one below it
 (and $2$ and $3$ are equivalent):
% The following are equivalent:
\begin{itemize}
\item[1)]
\begin{itemize}
\item[1a)] $(\forall r \in \mathbb{R})\, r^\#$ exists,
\item[1b)] $(\forall A \subseteq \omega_1)
 (\exists r \in \mathbb{R})\, A \in L[r]$.
\end{itemize}
\item[2)] The club filter on $\omega_1$ is
 an ultrafilter.
\item[3)] The club filter on $\omega_1$ is a
 (countably complete) normal ultrafilter.
\item[4)] $\omega_1$ is a measurable cardinal.
\end{itemize}
\end{proposition}
\begin{proof}

1) $\Rightarrow$ 2):
 we will show that the club filter is ultra.
Let $A \subseteq \omega_1$.
Let $r \in \mathbb{R}$ be such that $A \in L[r]$.
Using that $r^\#$ exists,
 for a final segment of $L[r]$-indiscernibles $\alpha < \omega_1$,
 either $\alpha \in A$ or $\alpha \not\in A$.
A final segment of $L[r]$-indiscernibles is a club,
 so this shows that the club filter is ultra.

2) $\Rightarrow$ 3):
 This is by
 Proposition~\ref{dc_and_club_ultra_implies_normal}.

3) $\Rightarrow$ 4):
 If there is a non-principal
 normal (or even just a countably complete)
 ultrafilter on $\omega_1$,
 then $\omega_1$ is measurable by definition.
% 4) $\Rightarrow$ 1):
% This is the only implication where we use
%  $V = L(\mathbb{R})$.
% From Theorem 5 and Corollary 6 in \cite{Welch},
%  we see that if $\zf + \dc + ``\omega_1$ is almost ${<}\omega_1$-Erd\"os'',
%  then every subset of $\omega_1$ is constructible from a real. 
% Hence since $\omega_1$ is measurable and we are assuming $\dc$,
%  we have every subset of $\omega_1$ is constructible from a real.
% Also since $\omega_1$ is measurable,
%  every real has a sharp.
\end{proof}

Here is the proof of the consistency of
 $\Phi(\omega_1, \omega_1, \omega)$:

\begin{theorem}[$\zf$]
\label{main_thm}
Assume
\begin{itemize}
\item $(\forall r \in \mathbb{R})\, r^\#$ exists,
\item $(\forall A \subseteq \omega_1)(\exists r \in \mathbb{R})\,
 A \in L[r]$.
\end{itemize}
Then $\Phi(\omega_1, \omega_1, \omega)$ holds.
\end{theorem}
\begin{proof}
Let $f : \omega_1 \times \omega_1 \to \omega$.
By hypothesis, fix $r \in \mathbb{R}$
 such that $f \in L[r]$.
Let $\mc{I} \subseteq \mbox{Ord}$ be the club of
 Silver indiscernibles for $L[r]$.
Let $a_1, ..., a_n \in \mc{I}$ be the indiscernibles
 used in the definition of $f$ in $L[r]$.
That is, there is a Skolem term $t$ such that
 $t(a_1, ..., a_n)^{L[r]} = f$.
In $L[r^\#]$,
 for each $\alpha < \omega_1$ choose
 $C_\alpha := (b^1_\alpha, ..., b^{m_\alpha}_\alpha, t_\alpha)$
 to be some finite
 sequence of $L[r]$-indiscernibles
  $b^1_\alpha, ..., b^{m_\alpha}_\alpha \in \mc{I}$
  and a Skolem term $t_\alpha$ such that
 $$\alpha =
 t_\alpha(b^1_\alpha, ..., b^{m_\alpha}_\alpha)^{L[r]}.$$
The function $\alpha \mapsto C_\alpha$ is in $L[r^\#]$.
Let $\mbox{Type}(\alpha) := (m_\alpha, t_\alpha)$,
 and let $\mc{T}$ be the collection of all possible types.
Note that $\mc{T}$ is countable.
The value of $f(\alpha_1, \alpha_2)$,
 since it is a natural number, depends only on
 the Skolem terms $t$, $t_{\alpha_1}$, $t_{\alpha_2}$ and on
 the relative ordering of the indiscernibles in
 $C_{\alpha_1}$ and $C_{\alpha_2}$ with each other
 and with $a_1, ..., a_n$.
Knowing just $\mbox{Type}(\alpha_1)$
 and $\mbox{Type}(\alpha_2)$,
 only finitely many such orderings
 of the indiscernibles are possible,
 and so only finitely many values of $f(\alpha_1, \alpha_2)$
 are possible.
Given any $T_1, T_2 \in \mc{T}$,
 define $f''(T_1, T_2)$ to be the
 maximum of these possible values.
We have $f'' : \mc{T} \times \mc{T} \to \omega$.

Define $f' : \omega_1 \times \omega_1 \to \omega$ by
 $f'(\alpha_1, \alpha_2) = f''(\mbox{Type}(\alpha_1),
 \mbox{Type}(\alpha_2))$.
We have that
 $f \le f'$ ($f'$ everywhere dominates $f$).
If we can find functions
 $g'_1, g'_2 : \omega_1 \to \omega$ such that
 $(\forall x, y \in \omega_1)
 \, f'(x,y) \le \max \{ g'_1(x), g'_2(y) \}$,
 then we will be done.
By the definitions of $f'$ and $f''$, it suffices to find
 $g''_1, g''_2 : \mc{T} \to \omega$ such that
 $(\forall T_1,T_2 \in \mc{T})\,
 f''(T_1,T_2) \le \max \{ g''_1(T_1), g''_2(T_2) \}$,
 because then we can define
 $g'_1(x) := g''_1(\mbox{Type}(x))$ and
 $g'_2(y) := g''_2(\mbox{Type}(y))$.
There must be such functions $g''_1$ and $g''_2$
 because $|\mc{T}| = \omega$.
\end{proof}

% \begin{cor}[$\zf$]
% \label{up_cor}
% Assume $\dc$ + ``$V = L(\mathbb{R})$''
%  + ``$\omega_1$ is measurable''.
% Then $\Phi(\omega_1, \omega_1, \omega)$ holds.
% It is also true that the club filter on $\omega_1$
%  is a (countable complete) normal ultrafilter.
% \end{cor}

Solovay showed that
 $\ad$ implies $\omega_1$ is measurable
 (see Theorem 33.12 in \cite{Jech}).
Hence, $\ad$ implies every real has a sharp.
Also, Solovay proved under $\ad$ that
 every subset of $\omega_1$ is constructible
 from a real (see Lemma 2.8 of \cite{Kleinberg}).
Thus using the theorem above,
 we get the following:

\begin{cor}[$\zf$]
Assume $\ad$.
Then $\Phi(\omega_1, \omega_1, \omega)$ holds.
\end{cor}

But note also that $\ad$
 implies the club filter on $\omega_1$
 is an ultrafilter
 (see again Theorem 33.12 in \cite{Jech}).
So when we combine $\ad$ with $\dc$
 and use Proposition~\ref{dc_and_club_ultra_implies_normal}
 we get the following:
\begin{proposition}
$\ad + \dc$ implies the
 club filter on $\omega_1$
 is a (countably complete)
 normal ultrafilter.
\end{proposition}

Just as we observed in
 Corollary~\ref{no_injection_observation},
 note that if $\Phi(\omega_1, \omega_1, \omega)$
 holds and there is no injection of
 $\omega_2$ into $\mc{P}(\omega_1)$,
 then $\Phi(\lambda,\omega_1,\omega)$ holds
 for each ordinal $\lambda$.

It is natural to ask what fragments
 of the Axiom of Choice are consistent with
 $\Phi(\omega_1, \omega_1, \omega)$.
One such fragment consistent with
 $\Phi(\omega_1, \omega_1, \omega)$
 is the existence of a
 non-principal ultrafilter on $\omega$,
 which we will prove now.
Recall that $\omega \rightarrow (\omega)^\omega_2$
 is that statement that given any coloring
 of the infinite subsets of $\omega$ using two colors,
 there is an infinite subset of $\omega$
 all of whose infinite subsets have the same color.

\begin{theorem}
Assume $\ad$
 is consistent.
Then there is a model of $\zf$ in which
 $\Phi(\omega_1, \omega_1, \omega)$ holds
 but there is a non-principal ultrafilter on $\omega$.
\end{theorem}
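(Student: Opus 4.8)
The plan is to start with a model of $\zf + \ad$ and then force to adjoin a non-principal ultrafilter on $\omega$ by a forcing so mild that both hypotheses driving Theorem~\ref{mainadtheorem} survive. Recall from the proof of that theorem that $\Phi(\omega_1,\omega_1,\omega)$ follows from two facts about the ambient model: (A) $r^\sharp$ exists for every $r \in \mbb{R}$, and (B) every $A \subseteq \omega_1$ lies in some $L[r]$. So it suffices to produce a model of $\zf$ carrying a non-principal ultrafilter on $\omega$ in which (A) and (B) both hold.

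Take $V = L(\mbb{R})$ computed in a model of $\zf + \ad$, so that $V \models \zf + \dc$ together with (A), (B), and the infinite-exponent partition relation $\omega \to (\omega)^\omega_2$ (a consequence of $\ad$). Force over $V$ with $\mbb{P} = (\,[\omega]^\omega, \subseteq^*\,)$, the quotient $\p(\omega)/\mathrm{fin}$. A density argument shows that the generic filter generates a non-principal ultrafilter $\mc{U}$ on $\omega$: for each $A \subseteq \omega$ the conditions deciding $A \in \mc{U}$ versus $(\omega \m A) \in \mc{U}$ are dense (given $p \in [\omega]^\omega$, one of $p \cap A$, $p \m A$ is infinite), and for each $n$ the conditions $p$ with $p \cap n = \emptyset$ are dense, forcing $\mc{U}$ to be non-principal; in fact $\mc U$ is Ramsey. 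Since $\mbb{P}$ is $\sigma$-closed (a $\subseteq^*$-descending $\omega$-sequence has a pseudo-intersection, using $\dc$), the extension $V[\mc U]$ still satisfies $\dc$ and $\omega_1$ is not collapsed.

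The heart of the argument, and the step I expect to be hardest, is to show that (A) and (B) pass to $V[\mc U]$. This is exactly where $\omega \to (\omega)^\omega_2$ is used: by the barren extension theorem of Henle, Mathias and Woodin, forcing with $\p(\omega)/\mathrm{fin}$ over a model of $\omega \to (\omega)^\omega_2$ adds \emph{no new sets of ordinals} whatsoever (the new object $\mc U$ is a set of reals, not a set of ordinals, so this is no contradiction). Granting this, $V[\mc U]$ and $V$ have the same reals, so each $r^\sharp$ survives and (A) holds; and $V[\mc U]$ has the same subsets of $\omega_1$ as $V$, each of which lies in some $L[r]$ by (B) in $V$, so (B) holds in $V[\mc U]$. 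If one prefers not to cite the barren extension theorem, the same conclusion should be reachable directly by a fusion argument: given a $\mbb{P}$-name $\dot A$ for a subset of an ordinal, one uses the partition relation to thin a condition down to one deciding $\dot A$ completely, showing $\mbb{P}$ is $\infty$-distributive for sets of ordinals.

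Finally, assembling the pieces, $V[\mc U] \models \zf$, it carries the non-principal ultrafilter $\mc U$ on $\omega$, and since (A) and (B) hold there, the proof of Theorem~\ref{mainadtheorem} applies verbatim to yield $\Phi(\omega_1,\omega_1,\omega)$. Note that $\omega \to (\omega)^\omega_2$ must itself fail in $V[\mc U]$, since a non-principal ultrafilter on $\omega$ refutes it; thus this construction also witnesses that $\Phi(\omega_1,\omega_1,\omega)$ is strictly weaker than the full partition-property package supplied by $\ad$.
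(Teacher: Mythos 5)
Your proof is correct and is essentially the paper's: both start from a model of $\zf + \ad + \omega \rightarrow (\omega)^\omega_2$, force with $\p(\omega)/\mathrm{fin}$ to add a non-principal ultrafilter, and invoke the Henle--Mathias--Woodin barren extension theorem to conclude that no new sets of ordinals are added, whence $\Phi(\omega_1,\omega_1,\omega)$ persists. The only cosmetic difference is that the paper preserves $\Phi(\omega_1,\omega_1,\omega)$ directly (any $f : \omega_1 \times \omega_1 \to \omega$ in the extension is coded by a set of ordinals, hence already lies in the ground model together with its witnesses $g_1, g_2$), whereas you re-verify the sharp hypotheses (A) and (B) and rerun the proof of Theorem~\ref{mainadtheorem} inside the extension.
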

\begin{proof}
Assume $V$ satisfies $\ad$.
Let $M_1 = L(\mathbb{R})$.
We have that $M_1 \models \ad$.
Thus $M_1$ satisfies $\Phi(\omega_1, \omega_1, \omega)$.
By a result of Mathias \cite{Mathias},
 since $M_1 \models ($``$V = L(\mathbb{R})$''$ + \ad)$,
 we have that $M_1$ satisfies
 $\omega \rightarrow (\omega)^\omega_2$.

Let $\mbb{P}$ be the $P(\omega)/\mbox{Fin}$ forcing
 of $M_1$.
Let $H$ be $\mbb{P}$-generic over $M_1$
 and let $M_2 := M_1[H]$.
Since $M_1 \models \omega \rightarrow (\omega)^\omega_2$,
 forcing with $\mbb{P}$
 \textit{will add no new sets of ordinals}
 (see \cite{Barren}).
Hence, $M_2 \models \Phi(\omega_1, \omega_1, \omega)$.
On the other hand,
 $M_2 \models$ ``there is a non-principal
 ultrafilter on $\omega$'',
 by the nature of the $\mathbb{P}$ forcing.
\end{proof}

Suppose we wanted to prove a result like
 Theorem~\ref{main_thm},
 but with $\Phi(\omega_2, \omega_2, \omega)$ in place
 of $\Phi(\omega_1, \omega_1, \omega)$.
Let $T_3$ be the tree of a scale of a complete
 $\Pi^1_3$ set of reals.
It follows from $\ad$ that every subset of
 $\omega_2$ (and even every subset of $\aleph_{\omega+1}$)
 is in $L[T_3,x]$ for some real $x$.
So, while we proved $\Phi(\omega_1, \omega_1, \omega)$
 using an analysis of the models $L[x]$ for $x \in \mathbb{R}$,
 it is reasonable to assume that a sufficient understanding
 of the $L[T_3,x]$ models will prove
 $\Phi(\omega_2, \omega_2, \omega)$.
Dance \cite{Dance} has made recent progress on
 an analysis of the $L[T_3,x]$ models,
 finding a system of indiscernibles for them.
However, we are unable to complete a proof
 that $\Phi(\omega_2, \omega_2, \omega)$
 holds assuming $\ad$.
Indeed, the consistency of $\Phi(\omega_2, \omega_2, \omega)$
 is still open.

Let us conclude this section
 with a way to reorganize
 Proposition~\ref{equiv_of_measurable}
 that may provide insight:
\begin{proposition}[$\zf + \dc$]
Assume that every subset of $\omega_1$
 is constructible from a real.
Then the following are equivalent:
\begin{itemize}
\item[1)] Every real has a sharp.
\item[2)] The club filter on $\omega_1$ is an ultrafilter.
\item[3)] The club filter on $\omega_1$
 is a (countably complete) normal ultrafilter.
\item[4)] $\omega_1$ is measurable.
\end{itemize}
\end{proposition}
\begin{proof}
1) $\Rightarrow$ 2): This is what we did in
 the 1) $\Rightarrow$ 2) direction
 of Proposition~\ref{equiv_of_measurable}.
Here we do not need $\dc$.

2) $\Rightarrow$ 3):
This follows from
 Proposition~\ref{dc_and_club_ultra_implies_normal}.
Here we only need $\dc$,
 not that every subset of $\omega_1$
 is constructible from a real.

3) $\Rightarrow$ 4): This is trivial.

4) $\Rightarrow$ 1): Let $\mc{U}$ be a normal
 measure on $\omega_1$.
Fix $r \in \mathbb{R}$.
We will show that $r^\#$ exists.
We have that $\omega_1$ is measurable in
 the model of $L[\mc{U},r]$,
 which satisfies $\zfc$.
So $L[\mc{U},r]$ thinks that $r^\#$ exists.
Hence, $r^\#$ actually exists.
\end{proof}

\section{Consistency Strength Lower Bound}

To show that $\Phi(\omega_1, \omega_1, \omega)$
 implies there are inner models with large cardinals,
 we will additionally assume some fragment
 of the Axiom of Choice.
The first such fragment
 we will consider
 is that $\omega_1$ is a regular cardinal:

\begin{proposition}[$\zf$]
\label{inacc_in_every_zfc}
Assume $\omega_1$ is regular and that
 $\Phi(\omega_1, \omega_1, \omega)$ holds.
Let $M$ be an inner model of $\zfc$.
Then $\omega_1$ is strongly inaccessible in $M$.
\end{proposition}
\begin{proof}
By Corollary~\ref{no_injection_cor},
 there is no injection of $\omega_1$ into $\mathbb{R}$.
By a well-known argument,
 this together with $\omega_1$ being regular
 implies that $\omega_1$
 is strongly inaccessible in $M$.
\end{proof}

For this section,
 we will use the following definition:

\begin{definition}
Let $M$ be an inner model.
Then $$S^M := \{ \alpha < \omega_1 : M \models \cf(\alpha) \le \omega \},$$
 $$R^M := \{ \alpha < \omega_1 : M \models
 \alpha \mbox{ is a regular cardinal} \}.$$
\end{definition}

\begin{lemma}($\zf$)
Assume $\omega_1$ is regular and
 that $\Phi(\omega_1, \omega_1, \omega)$ holds.
Let $M$ be an inner model of $\zfc$.
Then $S^M$
 is a non-stationary subset of $\omega_1$.
\end{lemma}
\begin{proof}
By Lemma~\ref{some_ords_are_semi_met},
 $S^M$ is $T_1$ and uniformly first-countable in $M$.
It follows from Theorem~\ref{metric_space_thm}
 that we can write
 $S^M = \bigcup_{n < \omega} S_n$
 where each $S_n$ is a closed and discrete
 subset of $S^M$,
 and where $S_0 \subseteq S_1 \subseteq ...$.
Since $S_n$ is a closed subset of $S^M$
 and $S^M$ has the subspace topology
 induced by $\omega_1$,
 we have $S_n = C_n \cap S^M$
 for some closed $C_n \subseteq \omega_1$.
Without loss of generality,
 let $C_n$ be the closure of $S_n$
 in $\omega_1$.
Let $C'_n$ be the set of limit points of $C_n$.
We claim that $C'_n \cap S^M = \emptyset$.

To see why, let $\alpha \in C'_n$.
Now $\alpha$ is a limit of points $\beta$ in $C_n$,
 where each such $\beta$ is either in $S_n$
 or is the limit of points in $S_n$.
Hence $\alpha$ is a limit of points in $S_n$.
Suppose towards a contradiction that $\alpha \in S^M$.
Fix $m \ge n$ such that $\alpha \in S_m$.
Then $\alpha \in S_m$ is the limit of a sequence
 of points in $S_m$,
 which contradicts $S_m$ being discrete.

Now note that
 $S^M$ is unbounded below $\omega_1$
 because of ordinals of the form $\gamma + \omega$.
Since $S^M = \bigcup_{n < \omega} S_n$
 and $\omega_1$ is regular in $V$,
 fix $n \in \omega$ such that $S_n$ is unbounded below $\omega_1$.
Then $C_n'$ is unbounded below $\omega_1$ and is also closed.
So $S^M$ is disjoint from the club $C_n'$,
 so $S^M$ is non-stationary.
\end{proof}

Now let us strengthen the hypothesis
 ``$\omega_1$ is regular'' to
 ``the club filter on $\omega_1$ is normal''.
Note this is indeed a strengthening because
 if $\omega_1$ was singular,
 then $\omega_1$ would be the union of
 countably many countable sets (which are
 therefore each non-stationary),
 so the non-stationary ideal on $\omega_1$
 would not be closed
 under countable unions.

Using this stronger fragment of the Axiom of Choice,
 we can show that $\Phi(\omega_1, \omega_1, \omega)$
 implies that every real has a sharp,
 that there is an inner model with a measurable cardinal,
 and even a bit more.
These proofs will factor through the following concept,
 which in this paper we will call
 the \textit{black box}.
\begin{definition}
\label{black_box}
The \textit{black box} is the statement that
 for any inner model $M$ of $\zfc$,
 the following hold:
\begin{itemize}
\item[1)] $\omega_1$ is strongly inaccessible in $M$,
\item[2)] $R^M$ contains a club subset of $\omega_1$.
\end{itemize}
\end{definition}

\begin{lemma}[$\zf$]
Assume the club filter on $\omega_1$ is normal
 and that $\Phi(\omega_1, \omega_1, \omega)$ holds.
Let $M$ be an inner model of $\zfc$.
Then $R^M$ contains a club subset of $\omega_1$.
\end{lemma}
\begin{proof}
For every $\beta < \omega_1$,
 $$X_\beta := \{ \alpha < \omega_1 : M \models \cf(\alpha) \le \beta \}$$
 is non-stationary,
 since $X_\beta \subseteq S^{M[s]}$ where $s \in \mathbb{R}$
 is such that $M[s] \models (\beta$ is countable$)$.
Let $C$ be the diagonal intersection
 of the sets $\omega_1 - X_\beta$
 for $\beta < \omega_1$.
Since each $\omega_1 - X_\beta$ contains a club
 and since the club filter on $\omega_1$ is normal,
 $C$ itself contains a club.
Now
 $$C = \{ \alpha < \omega_1 : (\forall \beta < \alpha)\, \alpha \not\in X_\beta \}.$$
That is,
 $$C = \{ \alpha < \omega_1 : (\forall \beta < \alpha)\,
 M \models \cf(\alpha) > \beta \}.$$
So $C = R^M$.
\end{proof}

\begin{cor}[$\zf$]
Assume the club filter on $\omega_1$ is normal
 and that $\Phi(\omega_1, \omega_1, \omega)$ holds.
Then the black box holds.
\end{cor}

\begin{definition}
Given an inner model $M$,
 we say $M$ \textit{computes singulars correctly}
 iff whenever $\kappa$ is a singular
 cardinal in $V$,
 then $\kappa$ is a singular cardinal in $M$.
\end{definition}

By a result of Jensen (see \cite{Mitchell}),
 for any real $r \in \mathbb{R}$,
 if $L[r]$ does not compute singulars correctly,
 then $r^\#$ exists.

\begin{proposition}[$\zf$]
Assume the black box holds.
Then for each $r \in \mathbb{R}$,
 $r^\#$ exists.
\end{proposition}
\begin{proof}
Fix $r \in \mathbb{R}$.
By the previous lemma,
 $R^{L[r]}$ contains a club $C \subseteq \omega_1$ (which is not in $L[r]$).
Now $L[r][C]$ is an inner model of $\zfc$,
 by Proposition~\ref{zfc_preservation}.
By 1) of the black box,
 $\omega_1$ is strongly inaccessible in $L[r][C]$.
So we may find a $\kappa \in C$
 that is a singular cardinal in $L[r][C]$.
Since $\kappa \in C$,
 $\kappa$ is regular in $L[r]$.
So $L[r]$ is not correct about the singulars
 of $L[r][C]$.
Hence $L[r][C] \models r^\#$ exists.
Any inner model that thinks it contains $r^\#$
 is right about this,
 so $V \models r^\#$ exists.
\end{proof}

For the next result, let $K$ refer to
 $K^{DJ}$, the Dodd-Jensen core model.
By a result of Dodd and Jensen (see \cite{Mitchell}),
 if there is no inner model with a measurable cardinal,
 then
 \begin{itemize}
 \item[1)] $K$ computes singulars correctly,
 \item[2)] if $N$ is any inner model such that $N \supseteq K$,
 then $K^N = K$.
 \end{itemize}

\begin{proposition}[$\zf$]
\label{inner_model_with_measurable}
Assume the black box holds.
Then there is an inner model with a measurable cardinal.
\end{proposition}
\begin{proof}
Suppose, towards a contradiction,
 that there is no inner model with a measurable cardinal.
Let $M$ be the Dodd-Jensen core model $K$ of $V$.
We have that $M$ is an inner model of $\zfc$.
By 2) of the black box,
 $R^{M}$ contains a club $C \subseteq \omega_1$\
 (which is not in $M$).
Now $M[C]$ is an inner model of $\zfc$
 by Proposition~\ref{zfc_preservation}.
By 1) of the black box,
 $\omega_1$ is strongly inaccessible in $M[C]$.
So we may find a $\kappa \in C$
 that is a singular cardinal in $M[C]$.
Now $\kappa$ is regular in $M$ but singular in $M[C]$.
Thus $M[C]$ thinks that $M$ does not
 compute singulars correctly.

But $M[C]$ has no inner model with a measurable cardinal
 (because $V$ has none).
So $M[C]$ thinks $K^{M[C]}$ computes singulars correctly.
But since $M[C]$ contains $M = K$,
 we have $K^{M[C]} = K = M$.
So $M[C]$ thinks $M$ both does and does not compute
 singulars correctly, which is impossible.
\end{proof}

\begin{remark}
Now that we have that the black box implies
 there is an inner model with a measurable cardinal,
 here is how we might show that $0^\dagger$ exists.
Assume the black box and fix
 an inner model $L[\mu]$
 with exactly one measurable cardinal $\kappa$.
It will follow from Proposition~\ref{heavy_duty}
 that we can fix such an $L[\mu]$ model
 such that $\kappa < \omega_1^V$,
 but for now just assume that this is the case.
Let $M = L[\mu]$.
Fix a club $C \subseteq \omega_1$
 subset of $R^M$.
Let $\lambda = \omega_1^V$.
The key is that $M[C] \models ($any forcing extension
 of $M$ by a poset $\mbb{P} \in M$
 of size $|\mbb{P}|^M < \lambda$
 will be wrong about the singulars of $M[C]$
 cofinally below $\lambda$).
In particular neither $M$ nor any Prikry generic
 forcing extension of $M$ (using the
 normal ultrafilter on $\kappa$)
 can be correct about the singulars of $M[C]$.
So by the covering theorem for $0^\dagger$
 (see \cite{Mitchell}),
 it must be that $M[C] \models (0^\dagger$ exists).
Hence $0^\dagger$ exists.
\end{remark}

A target somewhat beyond $0^\dagger$
 is an inner model $M$ with a measurable $\kappa$
 of high Mitchell order
 (but still with Mitchell order $< \kappa$).
Our strategy is to use the fact that for
 any singular cardinal $\kappa$
 of uncountable cofinality that is regular in
 ``the core model'',
 then the Mitchell order of $\kappa$ in
 the core model is at least $\cf^V(\kappa)$.
There are versions of this theorem with weaker
 and weaker anti-large cardinal hypothesis.
Mitchell proved a version,
 as did Sean Cox \cite{cox}.
We will use the version
 that assumes there is no inner model with
 a Woodin cardinal,
 due to Mitchell and Schimmerling:

\begin{fact}[Mitchell and Schimmerling \cite{schimmerling}]
\label{sean_cox_fact}
Assume there is no inner model with a Woodin cardinal.
Let $K$ be the core model.
Assume $\kappa > \omega_2$
 is a cardinal that
 is regular in $K$ but singular in $V$.
Assume that $\kappa$ has uncountable cofinality
 $\nu := \cf^V(\kappa)$ in $V$.
Then $K \models \kappa$ is measurable
 with Mitchell order $\ge \nu$.
\end{fact}
Note: The theorem in \cite{schimmerling}
 does not assume that $\kappa$ is a cardinal,
 but rather that $\kappa$ is an ordinal
 such that $\cf(\kappa) < |\kappa|$.

Our argument now will mirror
 Proposition~\ref{inner_model_with_measurable},
 where we start with $M =$ the core model,
 and we form $M[C]$ where $C \subseteq \omega_1$
 is a club of regular cardinals of $M$.
We then show that the core model 
 of $M[C]$ cannot compute singulars correctly.
In Proposition~\ref{inner_model_with_measurable}
 we had that the core model of $M[C]$
 was $M$ itself.
While we are not sure if we have that in
 the situation we are about to consider,
 we will have that the core model of $M[C]$
 is \textit{contained} in $M$.

We have this by using the Steel core model.
There is a $\Sigma_2$ formula $\psi_K(x)$ such that
 if there is no inner model
 with a Woodin cardinal, then
 $\{ x : \psi_K(x) \}$
 is an inner model
 known as the Steel core model $K^{Steel}$,
 which we simply denote as $K$.
\begin{fact} [Jensen and Steel \cite{k_without_measurable}]
Assume there is no inner model with a Woodin cardinal.
Let $\mu$ be an uncountable cardinal.
Then $K | \mu$ is $\Sigma_1$ definable over $L(H_\mu)$,
 uniformly in $\mu$.
\end{fact}
In particular, there is a $\Sigma_1$ formula $\varphi(x)$
 such that for any inner model $M$,
 for any uncountable cardinal $\mu$ of $M$
 and for any $x \in H_\mu^M$, we have
 $$(M \models \psi_K(x))
 \Leftrightarrow
 (L(H_\mu)^M \models \varphi(x)).$$

Note that if $X,Y$ are sets with $X \in Y$,
 then $L(X) \subseteq L(Y)$.
\begin{cor}
\label{core_model_shrinks}
Assume there is no inner model with a Woodin cardinal.
Let $N$ be an inner model.
Then $$K^N \subseteq K.$$
\end{cor}
\begin{proof}
It suffices to show for every uncountable cardinal $\mu$
 that $K^N | \mu \subseteq K$.
Fix such a $\mu$.
Fix $v \in K^N | \mu$.
So $N \models \psi_K(v)$.
Hence $L(H_\mu)^N \models \varphi(v)$,
 where $\varphi$ is the $\Sigma_1$ formula
 described above.
Now because $\varphi$ is $\Sigma_1$
 and $L(H_\mu)^N
 \subseteq L(H_\mu)$,
 we have $L(H_\mu) \models \varphi(v)$.
Thus
 $\psi_K(v)$ holds, and so
 $v \in K$.
\end{proof}

\begin{proposition}
\label{heavy_duty}
Assume the black box holds.
Then either there is an inner model
 with a Woodin cardinal,
 or else there is not and for some inner model $J$
 containing $K$,
 we have for each $\alpha < \omega_1$
 there exists $\kappa \in (\alpha, \omega_1)$
 such that
 $\kappa$ is measurable with Mitchell
 order $\ge \alpha$
 in $K^J$.
\end{proposition}
\begin{proof}
Assume there is no inner model
 with a Woodin cardinal.
Let $M := K = K^{Steel}$.
Since $M$ is an inner model of $\zfc$,
 by the black box fix a club $C \subseteq \omega_1^V$
 of regular cardinals of $M$.
Now fix $\alpha < \omega_1^V$.
Let $\kappa \in C$ be singular in $J = M[C]$
 with $\cf^{M[C]} \ge \alpha$.
Note that $\kappa$ is regular in $M$.

By Corollary~\ref{core_model_shrinks}
 we have $K^{M[C]} \subseteq M$.
Since $\kappa$ is regular in $M$,
 it is also regular in $K^{M[C]}$.
Now by Fact~\ref{sean_cox_fact}
 applied inside $M[C]$,
 we have that $\kappa$ is measurable in
 $K^{M[C]}$ with Mitchell order
 $\ge \alpha$.
\end{proof}

We do not see how to use \cite{mitchell_applications_of_covering},
 for example,
 to improve the lower bound of the black box to
 an inner model with a measurable $\kappa$
 of Mitchell order $\kappa$.

\section{Questions}

\subsection{Consistency strength related to $\Phi(\omega_1, \omega_1, \omega)$}

We know that $\zf$ + ``the club filter on $\omega_1$ is normal''
 + $\Phi(\omega_1, \omega_1, \omega)$ implies
 there are inner models with measurables of modestly
 high Mitchell order,
 but we do not know if there is an inner model
 with a measurable cardinal
 $\kappa$ of Mitchell order $\kappa$.
So the exact strength remains unknown:

\begin{question}
What is the consistency strength of
 $\zf$ + ``the club filter on $\omega_1$ is normal''
 + $\Phi(\omega_1, \omega_1, \omega)$?
\end{question}

So far our lower bounds factor through the \textit{black box}
 (Definition~\ref{black_box}),
 so we ask the following pure inner model theory question:
\begin{question}
What is the consistency strength
 of the black box?
\end{question}

Going in the other direction,
 because of our proof of the consistency of
 $\Phi(\omega_1, \omega_1, \omega)$
 in Theorem~\ref{main_thm},
 we ask the following:
\begin{question}
What is the consistency strength of
 $\zf$ + ``every real has a sharp'' +
 ``every subset of $\omega_1$
 is constructible from a real''?
\end{question}

By Theorem~\ref{equiv_of_measurable},
 the following closely related question
 is relevant:

\begin{question}
What is the consistency strength of
 $\zf + \dc$ + ``$V=L(\mathbb{R})$'' +
 ``the club filter on $\omega_1$ is an ultrafilter''?
\end{question}

Note that in the preprint \cite{mitchell_one_repeat}, 
 Mitchell shows
 that
 $\zf + \dc$ + the ``club filter on $\omega_1$ is an ultrafilter''
 is equiconsistent with
 $\zfc$ + ``there exists a weak repeat point''.

We also ask about $\Phi(\omega_1, \omega_1, \omega)$ itself,
 without any fragments of the Axiom of Choice:
\begin{question}
Does $\zf + \Phi(\omega_1, \omega_1, \omega)$
 imply there is an inner model with an inaccessible cardinal?
\end{question}

\subsection{Combinatorial consequences}

We are also interested in the combinatorial
 consequences of $\Phi(\omega_1, \omega_1, \omega)$
 together with fragments of the Axiom of Choice:

\begin{question}
Does $\zf$ + ``the club filter on $\omega_1$ is normal''
 + $\Phi(\omega_1, \omega_1, \omega)$ imply
 that every subset of $\omega_1$ is constructible from a real?
\end{question}

\subsection{Modifications of $\Phi$}

Given any inner model $M$,
 we can ask whether for every function
 $f : \omega_1 \times \omega_1 \to \omega$ in $M$
 there exists functions
 $g_1, g_2 : \omega_1 \to \omega$ in $V$
 such that
 $(\forall x,y \in \omega_1)\,
 f(x,y) \le \max \{ g_1(x), g_2(y) \}$.

\subsection{Beyond $\omega_1$}

The most intriguing question
 is whether $\omega_1$ can be enlarged:

\begin{question}
Is $\zf$ + $\Phi(\omega_2, \omega_2, \omega)$ consistent?
Does it follow from $\ad$?
\end{question}

A warm up question would be to ask whether
 $\Phi(\omega_2, \omega_1, \omega)$ is consistent.
And in general, we can ask about the status of
 $\Phi(\lambda_1, \lambda_2, \kappa)$
 for various $\lambda_1$, $\lambda_2$, and $\kappa$.
It would be interesting to know if there is any
 instance of $\Phi(\lambda_1, \lambda_2, \kappa)$,
 with $\lambda_1$ and $\lambda_2$ being ordinals,
 which fails in $L(\mathbb{R})$ assuming $\ad$.

\end{document}